\newtheorem{theorem}{Theorem}[section]
\newtheorem{proposition}[theorem]{Proposition}
\newtheorem{lemma}[theorem]{Lemma}
\newtheorem{corollary}[theorem]{Corollary}
\theoremstyle{definition}
\newtheorem{definition}[theorem]{Definition}
\providecommand{\cF}{\mathcal{F}}
\providecommand{\ZZ}{\mathbb{Z}}
\providecommand{\comp}[1]{\overline{#1}}
\DeclareMathOperator{\agr}{\nabla}
\DeclareMathOperator{\symdiff}{\Delta}
\providecommand{\xor}{\oplus}
\providecommand{\bigxor}{\bigoplus}
\providecommand{\inner}[2]{\langle #1, #2 \rangle}
\providecommand{\orth}[1]{#1^\perp}
\providecommand{\Kvecsp}[1]{\mathcal{K}_#1}
\providecommand{\tr}[1]{#1^T}
\begin{document}
\title{Triangle-intersecting families on eight vertices}
\author{Yuval Filmus\footnote{University of Toronto, \texttt{yuvalf@cs.toronto.edu}. Supported by NSERC.}}
\maketitle

\begin{abstract}
Simonovits and S\'{o}s~\cite{sos} conjectured that the maximal size of a triangle-intersecting family of graphs on $n$ vertices is $2^{\binom{n}{2}-3}$. Their conjecture has recently been proved~\cite{eff} using spectral methods. We provide an elementary proof of the special case of $8$ vertices using a partition argument.
\end{abstract}

\section{Introduction} \label{sec:intro}
The seminal paper of Erd\H{o}s, Ko and Rado~\cite{EKR} has initiated the study of intersecting families in extremal combinatorics. The primary object of investigation has been intersecting families of sets~\cite{EKR,Katona,ak}. Other authors considered more structured sets: for example, Deza and Frankl~\cite{df} considered intersecting families of permutations, and Simonovits and S\'{o}s~\cite{sos} considered graphical intersecting families.

One of the problems Simonovits and S\'{o}s considered is triangle-intersecting families. A family $\cF$ of subgraphs of $K_n$ is \emph{triangle-intersecting} if the intersection of any two $G_1,G_2 \in \cF$ contains a triangle. One example of such a family is a \emph{kernel system}, which consists of all graphs containing some fixed triangle. A kernel system contains $2^{\binom{n}{2}-3}$ graphs.

Simonovits and S\'{o}s conjectured that kernel systems are the unique maximal triangle-intersecting families. Their conjecture has recently been proved using spectral methods~\cite{eff}. In this note, we present a much simpler proof for the case $n=8$. We have been unable to extend our methods beyond $n=8$.

\medskip

Chung et al.~\cite{cgfs} provided the first non-trivial upper bound $2^{\binom{n}{2}-2}$ on the size of a triangle-intersecting family, using entropy methods. They also showed that the same bound holds even if we relax the definition by changing intersection $G_1 \cap G_2$ to agreement $G_1 \agr G_2 = (G_1 \cap G_2) \cup (\comp{G_1} \cap \comp{G_2})$. The latter is proved by reducing the problem of triangle-agreeing families to the original problem of triangle-intersecting families, and it applies in many other settings.

Suprisingly, all the proofs mentioned above actually work for even for \emph{non-bipartite-agreeing} families. Those are families in which we only require that the agreement of any two graphs be non-bipartite. This fact could perhaps be traced back to Tur\'{a}n's theorem on maximal triangle-free graphs.

\section{Notation} \label{sec:notation}
In this section we gather some notations which will be used in the sequel.

We will consider graphs as sets of their edges (at any given time, the set of vertices will be fixed). Therefore $G_1 \cap G_2$ is the graph whose edge set is the intersection of the edge sets of $G_1$ and $G_2$.

We use the following notation for common set operations:
\begin{itemize}
 \item The complement $\comp{G}$ of a graph $G$ is obtained by complementing the edge set with respect to the relevant complete graph.
 \item The symmetric difference $G_1 \symdiff G_2$ is defined by
\[ G_1 \symdiff G_2 = (G_1 \setminus G_2) \cup (G_2 \setminus G_1). \]
 \item The agreement $G_1 \agr G_2$ is defined by
\[ G_1 \agr G_2 = \comp{G_1 \symdiff G_2} = (G_1 \cap G_2) \cup (\comp{G_1} \cap \comp{G_2}). \]
\end{itemize}
The agreement operator outputs the set of elements on which both inputs agree.

The subgraphs of $K_n$ under the operation $\symdiff$ form a vector space over $\ZZ_2$, which we denote by $\Kvecsp{n}$.

\medskip

In the sequel we will discuss families of subgraphs of $K_n$. Such a family will be called a \emph{family of graphs on $n$ vertices}. We will be interested in the following types of families:
\begin{itemize}
 \item Triangle-intersecting families: the intersection of any two graphs contains a triangle.
 \item Non-bipartite-intersecting families: the intersection of any two graphs is not bipartite.
 \item Triangle-agreeing families: the agreement of any two graphs contains a triangle.
 \item Non-bipartite-agreeing families: the agreement of any two graphs is not bipartite.
\end{itemize}
Note that non-bipartite-intersecting families are also triangle-intersecting, and that X-agreeing families are also X-intersecting.

As mentioned in the introduction, the optimal families are kernel systems, to use the nomenclature of~\cite{cgfs}. The relevant kernel systems will be the following:
\begin{itemize}
 \item Triangle-junta: a family consisting of all supersets of some fixed triangle.
 \item Triangulumvirate: a family of the form $\{G : G \cap T = T_0\}$, where $T$ is a triangle and $T_0 \subseteq T$.
\end{itemize}
Notice that a triangle-junta is non-bipartite-intersecting, and that a triangulumvirate is non-bipartite-agreeing.

\medskip

In sections~\ref{sec:proof} and~\ref{sec:antilinear} we consider permutations in $S_8$. We think of these permutations as acting on the group $\ZZ_2^3$. We represent the elements of this group as numbers $\{0,\ldots,7\}$. The group operation then corresponds to exclusive or ($\xor$).

The inner product on elements from $\ZZ_2^n$ is denoted $\inner{\cdot}{\cdot}$. The set of all elements from $\ZZ_2^n$ orthogonal to some non-zero $x \in \ZZ_2^n$ is denoted $\orth{x}$.

\section{The proof} \label{sec:proof}
Our goal in this section is to prove the following theorem.
\begin{theorem} \label{thm:main}
 If $\cF$ is a non-bipartite-agreeing family of graphs on $8$ vertices then
\[ |\cF| \leq 2^{\binom{n}{2} - 3}, \]
with equality if and only if $\cF$ is a triangulumvirate.
\end{theorem}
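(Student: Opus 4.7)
We identify the eight vertices with $\ZZ_2^3$, giving $K_8$ the structure of a complete Cayley graph on this group: its $28$ edges decompose into $7$ perfect matchings $M_a = \{\{x, x \xor a\} : x \in \ZZ_2^3\}$ indexed by nonzero $a$, and triangles correspond to ``Fano lines'' $\{a, b, a \xor b\}$ inside the matching-index set $\ZZ_2^3 \setminus \{0\}$ (a triangle on $\{x, x \xor a, x \xor b\}$ uses matchings $M_a$, $M_b$, $M_{a\xor b}$). The central goal is to construct a three-dimensional linear subspace $W \leq \Kvecsp{8}$ such that every nonzero $w \in W$ has $\comp{w}$ bipartite. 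Given such a $W$, partitioning $\Kvecsp{8}$ into its $2^{25}$ cosets $g + W$ immediately yields the bound: any two distinct $G_1, G_2 \in \cF$ lying in the same coset would satisfy $G_1 \symdiff G_2 \in W \setminus \{0\}$, making $G_1 \agr G_2 = \comp{G_1 \symdiff G_2}$ bipartite and contradicting the non-bipartite-agreeing hypothesis. Hence each coset meets $\cF$ in at most one point, giving $|\cF| \leq 2^{25}$.

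\paragraph*{Key Obstacle.} The critical step is producing such a $W$. A graph $H$ has bipartite complement exactly when $H \supseteq K_A \cup K_B$ for some bipartition $[8] = A \sqcup B$, so the candidate elements of $W$ must be fairly dense (at least $12$ edges each, for $|A|=|B|=4$). Naive attempts fail: if $w_1, w_2$ are both ``balanced'' graphs of the form $K_A \cup K_B$ coming from distinct coordinate hyperplanes of $\ZZ_2^3$, then $w_1 \symdiff w_2$ turns out to be a complete bipartite graph $K_{C,D}$ whose complement is $2K_4$, which is \emph{not} bipartite --- so the span of such $w_i$ does not stay inside the ``bipartite-complement'' set. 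A successful construction must therefore use the $\ZZ_2^3$-structure in a coordinated way across all three generators; I expect it to exploit the Fano-line/matching decomposition together with the $S_8$-action on vertices set up in Section~\ref{sec:notation} and the antilinear machinery developed in Section~\ref{sec:antilinear}, perhaps by choosing $W$ inside a carefully averaged orbit rather than from a single canonical position. The reason the argument is confined to $n = 8$ is the coincidence $8 = 2^3$ giving Fano-plane symmetry; for larger $n$ the analogous subspace of the correct codimension cannot be found.

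\paragraph*{Equality.} At equality $|\cF| = 2^{25}$ the family $\cF$ picks exactly one representative from every coset of $W$, i.e., $\cF$ is a transversal. The forbidden-difference condition $(\cF \symdiff \cF) \cap \{w : \comp{w} \text{ bipartite}\} = \{\emptyset\}$ then rigidifies: one checks that the only transversals compatible with the non-bipartite-agreeing hypothesis are translates of a $25$-dimensional linear subspace of the form $V_T = \{G : G \cap T = \emptyset\}$ for some triangle $T \subseteq K_8$. Consequently $\cF = G_0 + V_T = \{G : G \cap T = T_0\}$ with $T_0 = G_0 \cap T$, which is precisely a triangulumvirate. The verification that no other transversals work is likely the most delicate piece of the equality analysis, and is where the antilinear structure of Section~\ref{sec:antilinear} should again play a role.
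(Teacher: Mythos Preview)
Your plan for the upper bound coincides with the paper's: a three-dimensional subspace $V \leq \Kvecsp{8}$ whose non-zero elements all have bipartite complement, followed by the one-per-coset argument. The construction you are reaching for is made explicit in Lemma~\ref{lem:V}: index the vertices by $\ZZ_2^3$, pick any antilinear permutation $\pi$, colour edges by $C(i,j)=\pi(i\xor j)$, and set $v_k(i,j)=\inner{C(i,j)}{k}$. Antilinearity of $\pi$ forces the neighbourhood set $\pi^{-1}(\orth{k}\setminus 0)$ to be linearly independent, so each $\comp{v_k}$ with $k\neq 0$ is a $3$-cube, hence bipartite. Thus the antilinear machinery is used only to build $V$; Section~\ref{sec:antilinear} itself is a side excursion enumerating such permutations and plays no further role in the proof.

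The genuine gap is your equality analysis. Your sentence ``one checks that the only transversals compatible with the non-bipartite-agreeing hypothesis are translates of $V_T$'' is exactly the content of the uniqueness claim, and the paper does \emph{not} dispatch it by any structural or antilinear argument of the kind you anticipate. Instead it proceeds in two quite different steps. First, it restricts to non-bipartite-\emph{intersecting} families at equality: if such a family is not a triangle-junta then for every triangle $T$ it must contain $T\symdiff v_T$ for some non-zero $v_T\in V$, and a \emph{computer search} verifies that no assignment $T\mapsto v_T$ yields a triangle-intersecting collection. Second, the general non-bipartite-agreeing case is reduced to the intersecting case by a monotonization (shifting) argument imported from~\cite{eff}: one shifts the extremal family to a monotone one, which is then automatically intersecting and hence a triangle-junta, and the delicate part is unwinding the shifts to conclude the original family was a triangulumvirate. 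Neither the computer verification nor the monotonization transfer appears in your outline, and the direct ``rigidity of transversals'' route you sketch is not carried out in the paper.
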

We remind the reader that this result is proved for any number of vertices in~\cite{eff}.

Our proof proceeds along the following steps:
\begin{enumerate}
 \item We construct a three-dimensional subspace $V$ of $\Kvecsp{8}$ whose non-zero vectors are complements of cubes.
 \item A non-bipartite-agreeing family can intersect any coset of $V$ in at most one vector, hence the upper bound.
 \item For non-bipartite-intersecting families, a computer search shows that the unique maximal families are triangle-juntas; the computer search utilizes the fact that a maximal family must intersect each coset of $V$ in exactly one vector.
 \item Uniqueness for non-bipartite-agreeing families follows by a transference argument from~\cite{eff}.
\end{enumerate}

We begin by constructing $V$. The construction hinges upon a special type of permutation in $S_8$ we call \emph{antilinear}.
\begin{definition} \label{def:antilinear}
A permutation $\pi \in S_8$ is \emph{antilinear} if:
\begin{enumerate}[(a)]
 \item $\pi(0) = 0$.
 \item For non-zero $x,y,z \in \ZZ_2^3$, either $x \oplus y \oplus z \neq 0$ or $\pi(x) \oplus \pi(y) \oplus \pi(z) \neq 0$.
\end{enumerate}
\end{definition}
It is clear from the symmetry of the definition that $\pi$ is antilinear if and only if its inverse $\pi^{-1}$ is antilinear.

Antilinear permutations do exist, as we can show by exhibiting one; section~\ref{sec:antilinear} is devoted to their analysis.
\begin{lemma} \label{lem:antilinear:example}
 The permutation $\pi = (1234)$ is antilinear.
\end{lemma}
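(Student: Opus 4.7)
The plan is to directly verify the two conditions of Definition~\ref{def:antilinear} for $\pi = (1234)$. Condition (a) is immediate: $\pi$ is a cycle on $\{1,2,3,4\}$ and therefore fixes every other element of $\{0,\ldots,7\}$, in particular $0$.

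For condition (b), I would first observe that if $x, y, z \in \ZZ_2^3 \setminus \{0\}$ satisfy $x \xor y \xor z = 0$, then they are necessarily pairwise distinct, since any equality such as $x = y$ would force $z = 0$. Identifying $\ZZ_2^3 \setminus \{0\}$ with $\{1,\ldots,7\}$, the unordered triples $\{x,y,z\}$ to which condition (b) applies are therefore exactly the seven lines of the Fano plane, namely
\[ \{1,2,3\},\ \{1,4,5\},\ \{1,6,7\},\ \{2,4,6\},\ \{2,5,7\},\ \{3,4,7\},\ \{3,5,6\}. \]
Thus condition (b) reduces to the statement that $\pi$ carries no Fano line to a Fano line.

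Since $\pi$ fixes $5, 6, 7$ and cyclically permutes $1 \to 2 \to 3 \to 4 \to 1$, this statement is a finite check of seven XOR computations, one per line. I would organize the cases by the number of fixed points each line contains: the unique line $\{1,2,3\}$ disjoint from $\{5,6,7\}$, the three lines $\{1,4,5\}, \{2,4,6\}, \{3,4,7\}$ containing exactly one fixed point, and the three lines $\{1,6,7\}, \{2,5,7\}, \{3,5,6\}$ containing two. In each case the image triple is easy to write down and its XOR can be verified to be nonzero by inspection; for instance $\{1,2,3\} \mapsto \{2,3,4\}$ with $2 \xor 3 \xor 4 = 5 \neq 0$. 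There is no conceptual obstacle, only the bookkeeping of seven trivial calculations.
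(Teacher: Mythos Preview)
Your proposal is correct and follows essentially the same route as the paper: verify $\pi(0)=0$, then check directly that none of the seven Fano lines is mapped to a Fano line by computing the seven XORs. Your added observation that the triples must be pairwise distinct and your grouping by number of fixed points are minor organizational touches, but the substance of the argument is identical.
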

\begin{proof}
Clearly $\pi(0) = 0$. In order to verify the other condition, it is enough to show that for non-zero $x,y,z \in \ZZ_2^3$, $x \oplus y \oplus z = 0$ implies $\pi(x) \oplus \pi(y) \oplus \pi(z) \neq 0$. There are seven such triplets $x,y,z$, corresponding to the seven lines of the Fano plane:
\begin{align*}
 &\pi(2) \oplus \pi(4) \oplus \pi(6) = 3 \oplus 1 \oplus 6 = 4, \\
 &\pi(1) \oplus \pi(4) \oplus \pi(5) = 2 \oplus 1 \oplus 5 = 6, \\
 &\pi(3) \oplus \pi(4) \oplus \pi(7) = 4 \oplus 1 \oplus 7 = 2, \\
 &\pi(1) \oplus \pi(2) \oplus \pi(3) = 2 \oplus 3 \oplus 4 = 5, \\
 &\pi(2) \oplus \pi(5) \oplus \pi(7) = 3 \oplus 5 \oplus 7 = 1, \\
 &\pi(1) \oplus \pi(6) \oplus \pi(7) = 2 \oplus 6 \oplus 7 = 3, \\
 &\pi(3) \oplus \pi(5) \oplus \pi(6) = 4 \oplus 5 \oplus 6 = 7. \qedhere
\end{align*}
\end{proof}

We proceed to construct $V$.
\begin{lemma} \label{lem:V}
 There exists a three-dimensional subspace $V$ of $\Kvecsp{8}$ whose non-zero vectors are complements of cubes.
\end{lemma}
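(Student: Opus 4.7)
The plan is to take $V$ to be the span of the complements of seven Cayley-style cubes indexed through the antilinear permutation $\pi$ of Lemma~\ref{lem:antilinear:example}. For each nonzero $v \in \ZZ_2^3$, define $B_v := \pi(\orth{v} \setminus \{0\})$, a three-element subset of $\ZZ_2^3 \setminus \{0\}$ (distinct, since $\pi$ is injective). Because $\orth{v} \setminus \{0\}$ is a Fano-plane line, its three elements $a, b, a \xor b$ are nonzero and sum to $0$, so Definition~\ref{def:antilinear}(b) forces $\pi(a) \xor \pi(b) \xor \pi(a \xor b) \neq 0$. Hence $B_v$ is a three-element set of nonzero vectors with nonzero sum, i.e., a basis of $\ZZ_2^3$. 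This is the only place antilinearity is used, and it is what I expect to be the main obstacle.

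With $B_v$ a basis, set $C_v := \{\{x,y\} : x \xor y \in B_v\}$; this Cayley graph is a copy of the $3$-cube and therefore a cube. Writing $M_a = \{\{x,y\} : x \xor y = a\}$ for the perfect matching associated with nonzero $a \in \ZZ_2^3$, one has $\comp{C_v} = \bigxor_{a \in \ZZ_2^3 \setminus (B_v \cup \{0\})} M_a$. The proposed subspace is then $V := \mathrm{span}_{\ZZ_2}\{\comp{C_{001}}, \comp{C_{010}}, \comp{C_{100}}\} \subseteq \Kvecsp{8}$.

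The remaining task is to verify $\comp{C_v} \symdiff \comp{C_w} = \comp{C_{v \xor w}}$ for all distinct nonzero $v, w$; expanded via the matching decomposition above, this reduces to the set identity
\[ B_v \symdiff B_w \symdiff B_{v \xor w} = \ZZ_2^3 \setminus \{0\}. \]
Since $\pi$ is a bijection fixing $0$, this is equivalent to the analogous identity for the hyperplane complements $\orth{u} \setminus \{0\}$, which follows from a direct case analysis on $(\inner{v}{x}, \inner{w}{x}) \in \ZZ_2^2$: every $x \in \ZZ_2^3$ lies in either one or three of $\orth{v}, \orth{w}, \orth{v \xor w}$, and the only triple count occurs at $x = 0$, which is stripped from each set before $\pi$ is applied. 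Once the identity is in place, the seven complements $\comp{C_v}$ together with the zero graph form an $8$-element subspace of $\Kvecsp{8}$, so $V$ has dimension exactly three and its nonzero vectors are exactly complements of cubes, as required.
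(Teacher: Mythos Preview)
Your construction is the paper's: it too builds Cayley graphs on $\ZZ_2^3$ whose connection sets arise by applying an antilinear permutation to the punctured hyperplanes $\orth{k}\setminus\{0\}$; the only packaging difference is that the paper writes $v_k(i,j)=\inner{\pi(i\xor j)}{k}$ directly, so that $V=\{v_k:k\in\ZZ_2^3\}$ is visibly a linear subspace and only the cube property needs checking, whereas you establish closure under $\symdiff$ by hand via the hyperplane identity. One small slip in that last step: the triple count is \emph{not} confined to $x=0$, since the unique nonzero $z\in\orth{v}\cap\orth{w}$ also lies in all three of $\orth{v},\orth{w},\orth{(v\xor w)}$; this is harmless, however, because your parity observation (every $x$ lies in one or three of the hyperplanes) already gives the symmetric-difference identity you need.
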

\begin{proof}
We construct a basis for $V$ in the shape of a $\ZZ_2^3$-coloring of the edges of $K_8$. Index the vertices of $K_8$ using $\ZZ_2^3$. The color $C(i,j)$ of the edge $(i,j)$ is defined by
\[ C(i,j) = \pi(i \oplus j). \]
The corresponding vector space $V$ is defined by
\[ V = \{ v_k : k \in \ZZ_2^3 \}, \quad v_k(i,j) = \inner{C(i,j)}{k}. \]

We proceed to show that $\comp{v_k}$ is a cube for $k \neq 0$; this also implies that $V$ is three-dimensional. Two vertices $(i,j)$ are connected in $\comp{v_k}$ if $\inner{\pi(i \oplus j)}{k} = 0$, or equivalently $i \oplus j \in \pi^{-1}(\orth{k})$. Thus every vertex $i$ is connected to the set of vertices $i \oplus N$, where
\[ N = \pi^{-1}(\orth{k} \setminus 0). \]
The three non-zero vectors $x,y,z \in \orth{k}$ satisfy $x \oplus y \oplus z = 0$, and so by antilinearity the three vectors in $N$ are linearly independent. A moment's reflection leads us to conclude that $\comp{v_k}$ is a cube.
\end{proof}

The proof of theorem~\ref{thm:main} follows the steps outlined above.
\begin{proof}[Proof of theorem~\ref{thm:main}]
 Recall that $\cF$ is a non-bipartite-agreeing family on $8$ vertices. The vector space $\Kvecsp{8}$ decomposes as a disjoint union of $2^{\binom{n}{2}-3}$ cosets of $V$. Suppose that $G_1,G_2$ belong to the same coset of $V$. Thus $G_1 \agr G_2 = \comp{G_1 \symdiff G_2} \in \comp{V}$, so that $G_1 \agr G_2$ is a cube. This implies that $\cF$ cannot contain both $G_1$ and $G_2$. Hence $\cF$ intersects each coset of $V$ in at most one graph, showing that $|\cF| \leq 2^{\binom{n}{2}-3}$.

If $|\cF| = 2^{\binom{n}{2}-3}$ then $\cF$ must intersect each coset of $V$ in exactly one graph. Suppose further that $\cF$ is non-bipartite-intersecting. If $\cF$ is not a triangle-junta then for each triangle $T$ it must contain $T \symdiff v_T$ for some non-zero $v_T \in V$. A computer search verifies that no choice of $v_T$ (for all $T$) results in a triangle-intersecting family. We conclude that $\cF$ must be a triangle-junta.

Finally, let $\cF$ be an arbitrary non-bipartite-agreeing family of size $2^{\binom{n}{2}-3}$. We sketch an argument showing that $\cF$ is a triangulumvirate; the full argument appears in~\cite{eff}. By applying a sequence of monotonizing operations, we transform $\cF$ into a monotone family $\cF_M$ which remains non-bipartite-agreeing. Since $\cF_M$ is monotone, it must be non-bipartite-intersecting. Hence it is a triangle-junta. By analyzing the process of monotonization (this is the difficult part), we conclude that $\cF$ must have been a triangulumvirate.
\end{proof}

\section{Generalizations} \label{sec:general}
Can the proof presented in the previous section be generalized for $n > 8$? Our proof relied on a linear subspace of $\Kvecsp{n}$ with certain properties. Unfortunately, in this section we show that the largest $n$ for which these properties hold is $8$. We next discuss several possible extensions of the proof.

We begin by showing that the proof of theorem~\ref{thm:main} cannot be extended literally for $n > 8$. The proof of theorem~\ref{thm:main} employs a three-dimensional subspace $V$ whose key property is that the complement of every non-empty graph in $V$ is bipartite. If we only want to bound the size of triangle-agreeing families, it is enough to demand that every non-empty graph is triangle-free. However, even this weaker property can hold only for $n \leq 8$.
\begin{proposition} \label{pro:main:noext}
 Suppose $V$ is a three-dimensional subspace of $\Kvecsp{n}$ such that the complement of every non-zero vector in $V$ is triangle-free. Then $n \leq 8$.
\end{proposition}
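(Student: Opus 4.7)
The plan is to translate the hypothesis on $V$ into a combinatorial constraint on an edge coloring of $K_n$, and then extract the bound $n \le 8$ by a pigeonhole argument at a single vertex.

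First I would dualize. For each edge $e$ of $K_n$, let $\chi_e \in V^*$ be the evaluation functional $\chi_e(v) = v(e)$. A non-zero $v \in V$ has $\comp{v}$ containing a triangle $T = \{e_1,e_2,e_3\}$ precisely when $v(e_1) = v(e_2) = v(e_3) = 0$. So the hypothesis on $V$ is equivalent to: for every triangle $T$ of $K_n$,
\[ \ker \chi_{e_1} \cap \ker \chi_{e_2} \cap \ker \chi_{e_3} = \{0\}. \]
Since $\dim V^* = 3$, this is the same as saying $\chi_{e_1}, \chi_{e_2}, \chi_{e_3}$ are linearly independent in $V^*$.

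Now I would package this as a coloring. Fix an identification $V^* \cong \ZZ_2^3$ and set $c(e) = \chi_e$. The conclusion of the previous paragraph is that the three edges of every triangle of $K_n$ receive three linearly independent colors in $\ZZ_2^3$. In particular, these three colors are pairwise distinct and all non-zero.

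Finally comes the pigeonhole step, which I expect to be completely immediate once the reformulation is in place; this is really the only ``calculation'' and it is not so much an obstacle as the whole point. Fix any vertex $u \in V(K_n)$ and consider its $n-1$ incident edges. For any two such edges $(u,a)$ and $(u,b)$, together with $(a,b)$ they form a triangle, so $c(u,a)$ and $c(u,b)$ must be distinct (and non-zero). Hence the map $c$ restricted to edges at $u$ is an injection into $\ZZ_2^3 \setminus \{0\}$, which has $7$ elements; therefore $n-1 \le 7$, i.e.\ $n \le 8$. (The bound is tight, realized by the subspace constructed in lemma~\ref{lem:V}.)
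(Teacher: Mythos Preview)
Your proof is correct and follows essentially the same route as the paper: both translate the hypothesis into a $\ZZ_2^3$-coloring of edges in which every triangle receives three linearly independent colors, and then finish by pigeonhole at a single vertex. The only cosmetic difference is that you phrase the coloring via evaluation functionals in $V^*$, whereas the paper fixes a basis of $V$ and works in coordinates (your linear-independence condition on $\chi_{e_1},\chi_{e_2},\chi_{e_3}$ is exactly the paper's regularity of the matrix $M=(x\ y\ z)$).
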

\begin{proof}
 In lemma~\ref{lem:V} we constructed such a subspace for $n=8$ using a coloring of the edges of $K_n$, which served as a basis for the subspace. We can reverse the process. Given a basis $b_0,b_1,b_2$ of $V$, we can color the edges of $K_n$ using elements of $\ZZ_2^3$ in a natural way:
\[ C(i,j) = (b_0(i,j),b_1(i,j),b_2(i,j)). \]
The different vectors in $V$ are then obtained according to the formula
\[ V = \{ v_k : k \in \ZZ_2^3 \}, \quad v_k(i,j) = \inner{C(i,j)}{k}. \]
We proceed to translate the assumption on $V$ to some property of the coloring. Suppose $x,y,z$ are the colors of some triangle in $K_n$. Then for all $k \neq 0$, not all of $\inner{x}{k},\inner{y}{k},\inner{z}{k}$ can be zero, for this would correspond to a triangle in $\comp{v_k}$. We can write this inequality in matrix form: thinking of $x,y,z$ as column vectors, let $M = \begin{pmatrix} x & y & z \end{pmatrix}$.
Then for all $k \neq 0$ we have $\tr{k}M \neq 0$, so that $M$ is regular. In particular, $x,y,z$ are all different and non-zero.

Any two edges incident to the same vertex can be completed to a triangle. Therefore all edges incident to a vertex must be colored using different non-zero colors. Since there are only $7$ non-zero elements in $\ZZ_2^3$, we deduce that $n \leq 8$.
\end{proof}

There are (at least) two natural ways to relax what we require of $V$:
\begin{enumerate}
 \item We can drop the assumption that $V$ is a linear subspace~\cite{ehudnotss}.
 \item Instead of requiring every non-zero element to be co-triangle-free, we can demand that every big enough subset of $V$ contains such an element; in order to get an upper bound of $2^{\binom{n}{2}-3}$, we will need the dimension of $V$ to be bigger as well.
\end{enumerate}

Taking both extensions into account, here is what we want of $V$.
\begin{definition} \label{def:useful}
 A subset $V$ of $\Kvecsp{n}$ of size $2^m$ is \emph{useful for triangles} if every subset $S$ of $V$ of size $2^{m-3}+1$ contains two vectors whose agreement is triangle-free.
\end{definition}

Using this definition, it is easy to extend the proof of theorem~\ref{thm:main}.
\begin{proposition} \label{pro:main:ext}
 Suppose there exists a subset $V$ of $\Kvecsp{n}$ which is useful for triangles. Then every triangle-agreeing family on $n$ vertices has size at most $2^{\binom{n}{2}-3}$.
\end{proposition}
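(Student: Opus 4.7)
The plan is to mimic the partition argument from the proof of theorem~\ref{thm:main}, with the subspace $V$ replaced by the (possibly unstructured) subset $V$ of definition~\ref{def:useful}. Since $V$ is no longer assumed to be a subgroup under $\symdiff$, its translates $G + V = \{G \symdiff v : v \in V\}$ no longer partition $\Kvecsp{n}$, but they form a regular cover, so a simple double-counting argument replaces the direct partition-based bound.

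First, for each $G \in \Kvecsp{n}$, let $\cF_G = \cF \cap (G + V)$; the key claim is $|\cF_G| \leq 2^{m-3}$. Write $\cF_G = \{G \symdiff v : v \in V'\}$ for some $V' \subseteq V$ of size $|\cF_G|$. If $|V'| \geq 2^{m-3}+1$, then by definition~\ref{def:useful} there exist $u, v \in V'$ whose agreement $u \agr v$ is triangle-free. The translation-invariance identity
\[ (G \symdiff u) \agr (G \symdiff v) = \comp{(G \symdiff u) \symdiff (G \symdiff v)} = \comp{u \symdiff v} = u \agr v \]
(using $G \symdiff G = 0$) then exhibits two members of $\cF$ whose agreement is triangle-free, contradicting triangle-agreeing.

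To finish, observe that every $H \in \cF$ lies in exactly $|V| = 2^m$ translates (namely $G + V$ for $G \in H + V$), so double-counting yields
\[ \sum_{G \in \Kvecsp{n}} |\cF_G| = |\cF| \cdot 2^m. \]
Combining this with $|\cF_G| \leq 2^{m-3}$ for each of the $2^{\binom{n}{2}}$ values of $G$ gives $|\cF| \cdot 2^m \leq 2^{\binom{n}{2} + m - 3}$, hence $|\cF| \leq 2^{\binom{n}{2}-3}$. The definition of useful-for-triangles was engineered to make this argument work, so there is no real obstacle beyond the above translation-invariance computation; the interesting question, which the next section will address, is whether such a $V$ actually exists for any $n > 8$.
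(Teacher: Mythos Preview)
Your argument is correct and is essentially the same as the paper's: both establish that $|\cF \cap (G+V)| \le 2^{m-3}$ for every translate $G+V$ via the translation-invariance identity $(G\symdiff u)\agr(G\symdiff v)=u\agr v$, and then average over $G$. The only cosmetic difference is that the paper phrases the averaging step probabilistically (pick a uniform translate, then a uniform point inside it) while you phrase it as an explicit double count; your version is arguably more careful in noting that the translates form a regular cover rather than a partition, whereas the paper loosely calls them ``cosets''.
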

\begin{proof}
 Let $\cF$ be a triangle-agreeing family. Just like in the proof of theorem~\ref{thm:main}, the properties of $V$ guarantee that $\cF$ intersects any coset of $V$ in at most $2^{m-3}$ vectors. We can choose a random vector in $\Kvecsp{n}$ uniformly by first choosing a random coset of $V$ and then choosing a random point in the coset. The probability that this random point is in $\cF$ is at most $1/8$, and so $\cF$ contains at most $1/8$ of the total number of graphs.
\end{proof}

We have reduced the problem of generalizing the proof of~\ref{thm:main} to that of construction a subset which is useful for triangles. Adapting the proof of proposition~\ref{pro:main:noext}, we can show that the size of $V$ must grow with $n$.
\begin{proposition} \label{pro:main:ext:dim}
 Suppose $V \subseteq \Kvecsp{n}$ is useful for triangles. For every $m$ there exists a constant $N_m$ such that $|V| \leq 2^m$ implies $n \leq N_m$.
\end{proposition}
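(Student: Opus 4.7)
The plan is to generalize proposition~\ref{pro:main:noext} by extracting from $V$ an orthogonal edge coloring of $K_n$ and then applying a linear-algebra dimension bound. For each edge $e$ of $K_n$ I will consider the $\pm 1$ vector $\chi(e) \in \{-1,+1\}^V$ given by $\chi(e)_g = (-1)^{g(e)}$. The goal is to show that whenever $e_1$ and $e_2$ share a vertex, $\chi(e_1)$ and $\chi(e_2)$ are orthogonal in $\mathbb{R}^V$; then the $n-1$ nonzero, pairwise orthogonal vectors at any fixed vertex of $K_n$ are linearly independent in $\mathbb{R}^V$, forcing $n-1 \leq |V| = 2^m$, so that $N_m := 2^m + 1$ suffices.

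The main step is to translate the usefulness hypothesis into this orthogonality. Given any triangle $T = \{e_1, e_2, e_3\}$ in $K_n$ and any profile $a \in \{0,1\}^3$, I will consider the fiber $F_a = \{g \in V : g(e_i) = a_i \text{ for } i = 1,2,3\}$. Any two graphs in $F_a$ agree on every edge of $T$, so their agreement contains a triangle; the useful-for-triangles property, applied contrapositively, then yields $|F_a| \leq 2^{m-3}$. Since the eight fibers partition $V$ and $|V| = 2^m$, each fiber must attain this bound exactly, and in particular every profile is realized.

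Now let $e_1, e_2$ be any two edges sharing a vertex (we may assume $n \geq 3$, otherwise the claim is trivial), and complete them to a triangle by adjoining a third edge $e_3$. Projecting the eight balanced fibers to $(g(e_1), g(e_2))$ yields $|\{g \in V : g(e_1) = g(e_2)\}| = 2 \cdot 2^{m-2} = 2^{m-1} = |V|/2$, which is exactly the condition $\inner{\chi(e_1)}{\chi(e_2)} = 0$ in $\mathbb{R}^V$.

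There is no serious obstacle beyond setting up the fiber balance correctly; the useful-for-triangles condition must be used in contrapositive form, and the balancedness of all eight fibers (not just their upper bound) is what drives the projection step. The resulting bound $N_m = 2^m + 1$ matches, up to an additive constant, the sharp value $N_3 = 8$ established in proposition~\ref{pro:main:noext}, which is mild evidence that it cannot be dramatically improved by a purely linear-algebraic argument of this kind.
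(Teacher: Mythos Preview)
Your argument is correct and gives a substantially sharper result than the paper's own proof. The paper proceeds by coloring the edges of $K_n$ with the full profile vector $(v_1(i,j),\ldots,v_{|V|}(i,j)) \in \ZZ_2^{|V|}$ and invoking Ramsey's theorem to obtain a monochromatic triangle; a pigeonhole on the majority bit of that color then produces at least $|V|/2$ vectors in $V$ whose pairwise agreements all contain this triangle, contradicting usefulness. This yields only a Ramsey-type bound $N_m \approx R_{2^{2^m}}(3)$.

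Your route avoids Ramsey entirely. By observing that each fiber $F_a$ over a triangle has size at most $|V|/8$ and that the eight fibers partition $V$, you force perfect balance, and hence uniform marginals on any pair of edges sharing a vertex; the orthogonality of the $\chi(e)$'s and the dimension bound $n-1 \le |V|$ follow. This gives the explicit $N_m = 2^m+1$, which is enormously better than the Ramsey bound and essentially matches (up to $+1$) the sharp value from proposition~\ref{pro:main:noext}. One tiny quibble: you write $|V| = 2^m$, whereas the statement allows $|V| \le 2^m$; but your argument really shows $n-1 \le |V|$, and then $|V| \le 2^m$ finishes it, so this is only a matter of phrasing.
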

\begin{proof}
 Let $V = \{v_k\}$. We color the edges of $K_n$ using colors from $\ZZ_2^{|V|}$ in the natural way:
\[ C(i,j) = (v_1(i,j),\ldots,v_{|V|}(i,j)). \]
Ramsey's theorem supplies us with some $N_m$ such that if $n > N_m$ then there exists some monochromatic triangle $T$ in the coloring, say colored by $c$. The most common bit in $c$ is shared by at least $|V|/2$ coordinates. Thus there are at least $|V|/2$ vectors in $V$ whose agreement contains $T$. Therefore $V$ isn't useful for triangles.
\end{proof}

It turns out that we can narrow our focus to those subsets which are in fact linear subspaces.
\begin{proposition} \label{pro:useful:span}
 Suppose $S \subseteq \Kvecsp{n}$ is useful for triangles. Its linear span $V$ is also useful for triangles.
\end{proposition}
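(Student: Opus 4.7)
The plan is to exploit the translation-invariance of the agreement operation under symmetric difference: for any graphs $a, b, w$ on the same vertex set,
\[ (a \symdiff w) \agr (b \symdiff w) = \comp{(a \symdiff w) \symdiff (b \symdiff w)} = \comp{a \symdiff b} = a \agr b, \]
so the property ``$a \agr b$ is triangle-free'' is preserved when $a$ and $b$ are translated by a common vector $w$. I would combine this with a simple averaging argument to reduce a statement about $V$ to the hypothesis on $S$.

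Write $|S| = 2^m$ and $|V| = 2^M$ (so $M \geq m$, with equality forcing $S = V$ and making the claim trivial). Given an arbitrary subset $T \subseteq V$ of size $2^{M-3}+1$, the goal is to find two elements of $T$ whose agreement is triangle-free. Pick $v \in V$ uniformly at random and consider the intersection of the translate $T \xor v$ with $S$. By a standard double-counting (or linearity of expectation over the pairs $(s,t) \in S \times T$ with $s = t \xor v$), its expected size equals
\[ \frac{|T|\,|S|}{|V|} = \frac{(2^{M-3}+1)\cdot 2^m}{2^M} = 2^{m-3} + 2^{m-M}. \]
Since $M > m$, the second term is strictly positive (and strictly less than $1$), so there exists some $v \in V$ with $|(T \xor v) \cap S| \geq 2^{m-3}+1$.

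Now $(T \xor v) \cap S$ is a subset of $S$ of size at least $2^{m-3}+1$, so by the hypothesis that $S$ is useful for triangles it contains two elements $s_1, s_2$ with $s_1 \agr s_2$ triangle-free. Writing $s_i = t_i \xor v$ with $t_i \in T$, the translation-invariance established above yields $t_1 \agr t_2 = s_1 \agr s_2$, which is triangle-free. As $t_1, t_2 \in T$, this is exactly what we needed.

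The main (mild) obstacle is simply verifying that $(T\xor v)\cap S$ is indeed a subset of $S$ to which the usefulness hypothesis applies, and checking the boundary case $M = m$; both are immediate. The crucial conceptual ingredient is the translation-invariance of $\agr$, which lets the ``triangle-free'' property of an agreement be transported across cosets within the subspace $V$.
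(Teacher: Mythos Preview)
Your proof is correct and follows essentially the same route as the paper: both use an averaging/double-counting over translates $v\in V$ to find some $v$ with $|(T\symdiff v)\cap S|>|S|/8$, and then invoke the translation-invariance $(s_1\symdiff v)\agr(s_2\symdiff v)=s_1\agr s_2$ to pull the triangle-free pair back into $T$. The only cosmetic difference is that the paper phrases the averaging as a two-step sampling of a uniform element of $V$, whereas you compute the expectation of $|(T\xor v)\cap S|$ directly.
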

\begin{proof}
 Let $T$ be a subset of $V$ of size larger than $|V|/8$. Consider the following process for picking a uniform random element of $V$: pick uniformly at random $v \in V$, and pick a random element of the coset $v \symdiff S$. The probability that the resulting element lies in $T$ is larger than $1/8$, and we conclude that there is a coset $v \symdiff S$ such that $|(v \symdiff S) \cap T| > |v \symdiff S|/8$. Equivalently, $|S \cap (v \symdiff T)| > |S|/8$. Since $S$ is useful for triangles, there are two elements $s_1,s_2 \in S \cap (v \symdiff T)$ whose agreement is triangle-free. Since $(s_1 \symdiff v) \agr (s_2 \symdiff v) = s_1 \agr s_2$, we deduce that there are two elements $s_1\symdiff v, s_2\symdiff v$ in $T$ whose agreement is triangle-free. Thus $V$ is useful for triangles.
\end{proof}

We have so far been unable to construct subsets of $\Kvecsp{n}$ useful for triangles even for $n=9$, and we question their existence. The most we can show is that there exist no four-dimensional subspaces of $\Kvecsp{9}$ which are useful for triangles.
\begin{proposition} \label{pro:useful:no4d}
 There exist no four-dimensional subspaces of $\Kvecsp{9}$ which are useful for triangles.
\end{proposition}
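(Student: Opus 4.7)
The plan is to translate any hypothetical useful four-dimensional $V \subseteq \Kvecsp{9}$ into an edge coloring of $K_9$ by $\ZZ_2^4$ and derive a contradiction by combining usefulness with proposition~\ref{pro:main:noext} (which caps $n$ at $8$ for three-dimensional subspaces with triangle-free complements). Following the same construction as proposition~\ref{pro:main:noext}, I would fix a basis $b_1,b_2,b_3,b_4$ of $V$ and color each edge by $C(i,j)=(b_1(i,j),b_2(i,j),b_3(i,j),b_4(i,j)) \in \ZZ_2^4$, so that the non-zero vectors of $V$ are recovered as $v_k(i,j)=\inner{C(i,j)}{k}$. The first observation is that usefulness forces every triangle of $K_9$ to have linearly independent colors: if some triangle had colors spanning at most two dimensions, their annihilator in $\ZZ_2^4$ would contain a two-dimensional subspace whose three non-zero elements $w$ all give $\comp{v_w}$ containing that triangle, contradicting usefulness on the corresponding line of $V$.

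Next I would set $T = \{w \in V \setminus 0 : \comp{v_w} \text{ contains a triangle}\}$ and extract two properties. First, $T$ is a cap in $PG(3,2)$: any line $\{p,q,p+q\} \subseteq T$ would make the pairwise agreements of $\{0,v_p,v_q\}$ all triangle-containing, violating usefulness. Second, $T$ hits every hyperplane $V' \subseteq V$, because otherwise every non-zero element of $V'$ would have triangle-free complement, so $V'$ would satisfy the hypothesis of proposition~\ref{pro:main:noext} and force $n \leq 8$, contradicting $n = 9$. The only eight-element caps in $PG(3,2)$ are affine hyperplanes, and each affine hyperplane $A_a = \{x : \inner{x}{a} = 1\}$ misses the parallel linear hyperplane $\orth{a}$; hence $|T| \leq 7$.

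The final step is to derive a contradiction from $|T| \leq 7$ together with the edge coloring. The $84$ triangles of $K_9$ distribute among at most $|T| \leq 7$ triangle-covering hyperplanes, so some hyperplane $H = \orth{w}$ with $w \in T$ carries at least $12$ triangles in its subgraph $G_H$, while the $\geq 8$ triangle-free hyperplanes contribute at most $20$ edges each (Mantel) to the identity $\sum_H |E(G_H)| = 7 \cdot 36 = 252$. I would then combine these global counts with the local rigidity of the coloring at each vertex---the eight link colors are distinct non-zero elements, and for each link pair with colors $c_i,c_j$ the third-edge color must avoid the two-dimensional subspace $\mathrm{span}(c_i,c_j)$---to rule out every candidate structure for $T$ and the coloring.

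The main obstacle is this last step. Although $T$ being a cap that hits every hyperplane admits only a small number of projective-equivalence classes for $|T| \leq 7$, checking that none extends to a valid coloring of $K_9$ appears delicate; I expect the final contradiction to come either from a clean counting identity combining the Mantel bound, the global edge count, and the per-vertex rigidity, or---given the paper's reliance on a computer search in the proof of theorem~\ref{thm:main}---from an exhaustive enumeration over the projective-equivalence classes of candidate $T$'s, verifying in each case that no compatible coloring of $K_9$ exists.
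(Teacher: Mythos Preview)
Your strategy closely parallels the paper's proof sketch. Both arguments pass to the index set $T = \{w \neq 0 : \comp{v_w}\text{ contains a triangle}\}$ (the paper works with its complement $I$), observe that usefulness forces $T$ to be a cap in $PG(3,2)$, and invoke Proposition~\ref{pro:main:noext} to dispose of the case in which $T$ misses some linear hyperplane.

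The substantive difference lies in the structural reduction. You stop at $|T| \leq 7$, obtained from the (correct) fact that the only $8$-caps are affine hyperplanes. The paper asserts a sharper dichotomy (labelled an ``elementary argument'' and not proved in the text): either $I$ contains the nonzero part of a three-dimensional subspace, or $I$ is $GL_4(2)$-equivalent to $I_0 = \{i \in \ZZ_2^4 : |i| \in \{1,2\}\}$. In your language, the only cap hitting every hyperplane is, up to linear equivalence, the five-point set $T_0 = \{i : |i| \geq 3\}$; in particular no such cap has size $6$ or $7$. This collapse to a single projective type is what makes the paper's concluding step concrete: a computer search over $K_9$-colorings compatible with the specific constraint pattern $I_0$.

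Your proposed counting step (the Mantel bound, the identity $\sum_w |E(\comp{v_w})| = 7 \cdot 36$, per-vertex color rigidity) has no counterpart in the paper and, as you yourself flag, does not by itself yield a contradiction. Your fallback---enumerating projective-equivalence classes of admissible $T$ and running a search for each---is effectively what the paper does, except that the paper first reduces the enumeration to the single class $T_0$. If you actually carry out the cap classification you allude to, you recover exactly the paper's dichotomy and land on the same computer search.
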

\begin{proof}[Proof sketch]
Suppose $V$ is a four-dimensional subspace of $\Kvecsp{n}$ which is useful for triangles. Thus, for any three different vectors $x,y,z \in V$, one of $x \agr y, x \agr z, y \agr z$ is triangle-free. What sort of subspaces give rise to such a property? Picking a basis of $V$, we can naturally write $V = \{ v_i : i \in \ZZ_2^4 \}$ in such a way that $v_i \symdiff v_j = v_{i \oplus j}$. Let
\[ I = \{ i \in \ZZ_2^4 : \comp{v_i}\text{ is triangle-free}\}. \]
The property of usefulness for triangles translates into the following property of $I$: if $x \neq y \notin I$ then $x \oplus y \in I$. Using this property, an elementary argument shows that either $I$ contains a three-dimensional subspace, or $I$ is equivalent (up to a linear mapping) to the set
\[ I_0 = \{i \in \ZZ_2^4 : |i| \in \{1,2\}\}. \]
In the first case, $V$ contains a three-dimensional subspace which is useful for triangles, so proposition~\ref{pro:main:noext} shows that $n \leq 8$. In the second case, using the constraints implied by $I_0$ we can search for a solution for $n=9$ recursively. The computer search comes up with no solutions.
\end{proof}

We are left with the following open question: do there exist subspace of $\Kvecsp{n}$ useful for triangles for $n > 8$?

\section{Antilinear permutations} \label{sec:antilinear}
Lemma~\ref{lem:antilinear:example} gives one example of an antilinear permutation. In this section we explore some of the properties of antilinear permutations. These properties enable us to describe and enumerate all $1344$ antilinear permutations in terms of the much smaller class of eight \emph{Fano permutations}.

Our starting point is the proof of lemma~\ref{lem:antilinear:example}. Observing the proof, we are led to the following definition.
\begin{definition} \label{def:signature}
 Let $\pi \in S_8$. Its \emph{signature} $\sigma \colon \ZZ_2^3 \longrightarrow \ZZ_2^3$ is defined by
\[ \sigma(x) = \sum_{y \in \orth{x}} \pi(y). \]
\end{definition}
Note that $\sigma(0) = 0$ always. In the case of lemma~\ref{lem:antilinear:example}, the signature was a permutation, and this is no coincidence. In fact, it is even linear.

\begin{lemma} \label{lem:signature}
If $\pi \in S_8$ satisfies $\pi(0) = 0$ then its signature $\sigma$ is a linear transformation. If $\pi$ is antilinear then $\sigma$ is regular and so a permutation.
\end{lemma}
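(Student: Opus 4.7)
The plan is to tackle the two assertions in turn. For the linearity of $\sigma$, my first observation is that $\pi$ being a permutation of $\ZZ_2^3$ forces $\sigma(0) = \sum_{y \in \ZZ_2^3} \pi(y) = \sum_{y \in \ZZ_2^3} y = 0$, using that the sum of all elements of $\ZZ_2^3$ is zero. Combined with $\pi(0) = 0$, this dispatches all the degenerate cases of the identity $\sigma(x_1 \oplus x_2) = \sigma(x_1) \oplus \sigma(x_2)$, namely when one of $x_1$, $x_2$, $x_1 \oplus x_2$ is zero or when $x_1 = x_2$.

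In the remaining case, where $x_1, x_2, x_1 \oplus x_2$ are three distinct nonzero elements, I would compute $\sigma(x_1) \oplus \sigma(x_2)$ by tallying, mod $2$, the multiplicity with which each summand $\pi(y)$ appears. Since $y \in \orth{x_i}$ iff $\inner{x_i}{y} = 0$, and bilinearity gives $\inner{x_1 \oplus x_2}{y} = \inner{x_1}{y} \oplus \inner{x_2}{y}$, the total multiplicity of $\pi(y)$ equals $1$ precisely when $y \notin \orth{x_1 \oplus x_2}$. Hence $\sigma(x_1) \oplus \sigma(x_2) = \sum_{y \notin \orth{x_1 \oplus x_2}} \pi(y)$, and subtracting this expression from $\sum_{y \in \ZZ_2^3} \pi(y) = 0$ recovers $\sigma(x_1 \oplus x_2)$, giving linearity.

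For the regularity claim when $\pi$ is antilinear, since $\sigma$ is now known to be a linear endomorphism of the $3$-dimensional $\ZZ_2$-space $\ZZ_2^3$, it suffices to verify that $\sigma(x) \neq 0$ for every nonzero $x$. For such an $x$, $\orth{x}$ is a $2$-dimensional subspace, so its three nonzero elements $y_1, y_2, y_3$ automatically satisfy $y_1 \oplus y_2 \oplus y_3 = 0$. Using $\pi(0) = 0$ once more, $\sigma(x) = \pi(y_1) \oplus \pi(y_2) \oplus \pi(y_3)$, which is nonzero by condition (b) of the definition of antilinearity applied to the triple $(y_1, y_2, y_3)$.

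The whole argument is essentially a bookkeeping exercise and I do not anticipate any real obstacle. The one mildly delicate step is the multiplicity count in the second paragraph, which crucially exploits the fact that $\pi$ permutes $\ZZ_2^3$ so that the global sum $\sum_y \pi(y)$ vanishes; once that is in hand, both halves of the lemma fall straight out of the definitions.
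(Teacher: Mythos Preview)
Your proof is correct and follows essentially the same approach as the paper: both arguments establish linearity via a multiplicity count that ultimately reduces to the identity $\bigoplus_{y}\pi(y)=0$, and both dispatch regularity directly from condition~(b) of antilinearity applied to the three nonzero elements of $\orth{x}$. The only cosmetic difference is that the paper sums three values $\sigma(x)\oplus\sigma(y)\oplus\sigma(z)$ (with $x\oplus y\oplus z=0$) and shows every coefficient $c_w$ is odd, whereas you sum two and identify the surviving terms as those with $y\notin\orth{x_1\oplus x_2}$; your version is arguably a touch cleaner, but the underlying idea is identical.
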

\begin{proof}
 In order to show that $\sigma$ is a linear transformation, we pick any non-zero $x,y,z$ summing to zero, and show that their images under $\sigma$ also sum to zero. We do this by expanding the definition of $\sigma$ and cancelling like terms. Write
\[ \sigma(s) = \sum_t \delta(s,t) \pi(t), \quad \delta(s,t) = \begin{cases} 1 & \text{if } \inner{s}{t} = 0, \\ 0 & \text{otherwise}. \end{cases} \]
We can now expand $\sigma(x)\xor\sigma(y)\xor\sigma(z)$:
\[ \sigma(x)\xor\sigma(y)\xor\sigma(z) = \bigxor_w c_w \pi(w), \quad c_w=\delta(x,w)+\delta(y,w)+\delta(z,w). \]
Since $x\xor y\xor z = 0$, there must be some $u \in \ZZ_2^3$ orthogonal to all of them. Notice that
\[ c_w = |\orth{u} \cap \orth{w}| - 1. \]
We deduce that $c_w \in \{1,3\}$, and so
\[ \sigma(x)\xor\sigma(y)\xor\sigma(z) = \bigxor_w \pi(w) = 0. \]
Thus $\sigma$ is a linear transformation.

If $\pi$ is antilinear then $\sigma(x) \neq 0$ for $x \neq 0$ by antilinearity, so that $\sigma$ is regular.
\end{proof}

Applying a linear transformation to any antilinear permutation, we can always reach a permutation whose signature is the identity permutation. We term this class of permutations \emph{Fano permutations}.
\begin{definition} \label{def:fano}
 An antilinear permutation $\pi \in S_8$ is \emph{Fano} if its signature is the identity.
\end{definition}

\begin{lemma} \label{lem:fano:normal:form}
 Every antilinear permutation has a unique representation of the form $L \varphi$, where $L$ is linear and $\varphi$ is Fano.
\end{lemma}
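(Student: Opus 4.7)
My plan is to exploit the covariance of the signature under left multiplication by linear maps, which pins down both $L$ and $\varphi$ uniquely once one observes how $\sigma$ behaves under composition.

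First I would establish the key identity: if $L$ is a linear permutation of $\ZZ_2^3$ and $\pi' = L\pi$, then
\[ \sigma_{\pi'}(x) = \sum_{y \in \orth{x}} L(\pi(y)) = L\sigma_\pi(x), \]
so $\sigma_{L\pi} = L\sigma_\pi$. This immediately forces uniqueness: if $\pi = L\varphi$ with $\varphi$ Fano, then $\sigma_\pi = L\sigma_\varphi = L$, since the signature of a Fano permutation is the identity. Hence $L$ is determined by $\pi$, and then $\varphi = L^{-1}\pi$ is too.

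For existence I would set $L := \sigma_\pi$ and $\varphi := L^{-1}\pi$. By lemma~\ref{lem:signature}, $\sigma_\pi$ is a regular linear transformation, so $L$ is a linear permutation. It remains to check that $\varphi$ is Fano. That $\varphi(0) = 0$ is clear; antilinearity of $\varphi$ follows because $\varphi(x) \xor \varphi(y) \xor \varphi(z) = L^{-1}(\pi(x) \xor \pi(y) \xor \pi(z))$ vanishes iff $\pi(x) \xor \pi(y) \xor \pi(z)$ does, as $L^{-1}$ is injective; and the signature of $\varphi$ is $\sigma_\varphi = L^{-1}\sigma_\pi = \mathrm{Id}$ by the covariance identity above, so $\varphi$ is Fano.

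The key conceptual step, and the only obstacle worth naming, is spotting the covariance relation $\sigma_{L\pi} = L\sigma_\pi$; once that is in hand the whole lemma falls out as a short unwinding of definitions combined with lemma~\ref{lem:signature}. I do not expect any genuine difficulty.
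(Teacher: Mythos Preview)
Your proposal is correct and follows essentially the same route as the paper: set $L=\sigma_\pi$, put $\varphi=L^{-1}\pi$, and use the behaviour of the signature under left composition by linear maps to verify that $\varphi$ is Fano and that the decomposition is unique. The only difference is cosmetic: you make the covariance identity $\sigma_{L\pi}=L\sigma_\pi$ explicit, whereas the paper invokes it tacitly when asserting that the signature of $L^{-1}\pi$ is $L^{-1}L=I$ and that $L$ must equal the signature of $L\varphi$.
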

\begin{proof}
 Suppose $\pi$ is an antilinear permutation with signature $L$. Lemma~\ref{lem:signature} implies that $L$ is a regular linear transformation, and so $\varphi = L^{-1} \pi$ is also antilinear. It is easy to see that the signature of $\varphi$ is $L^{-1} L = I$, so that $\varphi$ is Fano. Thus $\pi = L \varphi$ is the required representation.

 Conversely, notice that $L$ is the signature of $\pi = L \varphi$, so that $\pi$ determines $L$ (given that $\varphi$ is Fano). Since $L$ is regular, $\pi$ and $L$ determine $\varphi$.
\end{proof}

We continue with two simple properties of Fano permutations.
\begin{lemma} \label{lem:fano:orth1}
 Let $\pi$ be a Fano permutation. If $x$ is non-zero then
\[ \inner{x}{\pi(x)} = 1. \]
\end{lemma}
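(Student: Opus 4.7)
My plan is to deduce the identity from the permutation hypothesis by parameterizing all ``Fano-like'' functions---those satisfying $\pi(0) = 0$ and $\sigma_\pi = I$ but not necessarily bijective. Using the seven Fano equations iteratively, one can solve for each $\pi(y)$ in terms of the basis images $\pi(e_0), \pi(e_1), \pi(e_2)$, arriving at the parameterization $\pi(y) = Ty + c_y$, where $T$ is the $3 \times 3$ matrix whose $j$th column is $\pi(e_j)$ and $c_y \in \ZZ_2^3$ is a fixed correction term independent of $T$. A short computation identifies $c_y$ via $c_y[i] = y_j y_k$ for $\{i,j,k\} = \{0,1,2\}$.

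The permutation condition amounts to $\pi(y_1) + \pi(y_2) \neq 0$ for all $y_1 \neq y_2$; writing $z = y_1 + y_2 \neq 0$, this says $Tz \neq c_y + c_{y+z}$ for every $y$. Expanding the explicit formula for $c$ yields the identity $c_y + c_{y+z} = c_z + g_z(y)$, where $g_z \colon \ZZ_2^3 \to \ZZ_2^3$ is the symmetric linear map with zero diagonal and off-diagonal entries $(g_z)_{ij} = z_k$ for $\{i, j, k\} = \{0, 1, 2\}$.

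The crux is showing that the image of $g_z$ equals $\orth{z}$ whenever $z \neq 0$. The inclusion in $\orth{z}$ follows from a direct expansion giving $\inner{z}{g_z(y)} = 0$ (each monomial $z_a z_b y_c$ appears an even number of times). For the equality, $g_z(z) = 0$ places $z$ in the kernel, while some off-diagonal $2 \times 2$ minor of $g_z$ equals $1$ whenever a coordinate of $z$ does, forcing rank exactly $2$. Consequently $\{c_y + c_{y+z} : y \in \ZZ_2^3\}$ is exactly the coset $c_z + \orth{z}$.

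The permutation condition therefore becomes $Tz \notin c_z + \orth{z}$ for every $z \neq 0$, equivalently $\inner{z}{Tz + c_z} = 1$, which is precisely $\inner{z}{\pi(z)} = 1$. Contrapositively, if $\inner{x_0}{\pi(x_0)} = 0$ for some non-zero $x_0$, then $Tx_0 + c_{x_0} \in \orth{x_0}$ lies in the image of $g_{x_0}$, so one can solve $g_{x_0}(y) = Tx_0 + c_{x_0}$ and produce a collision $\pi(y) = \pi(y + x_0)$, contradicting injectivity. The main hurdle is the bookkeeping behind the identity $c_y + c_{y+z} = c_z + g_z(y)$ together with the rank-$2$ fact for $g_z$; both are elementary but require careful polynomial manipulation over $\ZZ_2$.
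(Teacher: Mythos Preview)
Your argument is correct, and in fact establishes more than the lemma asks: you show that among all functions $\pi$ with $\pi(0)=0$ and $\sigma_\pi = I$, the condition $\inner{z}{\pi(z)}=1$ for every non-zero $z$ is \emph{equivalent} to $\pi$ being a bijection. The explicit parameterization $\pi(y)=Ty+c_y$ with $c_y[i]=y_jy_k$ is right, the identity $c_y+c_{y+z}=c_z+g_z(y)$ checks out, and the rank computation for $g_z$ (kernel containing $z$, some $2\times 2$ minor equal to $z_i=1$) is fine.

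The paper's proof is much shorter and uses a single Fano equation directly: if $\inner{x}{\pi(x)}=0$ then $x\in\orth{\pi(x)}=\{0,x,y,z\}$, and the identity $\sigma(\pi(x))=\pi(x)$ reads $\pi(x)\oplus\pi(y)\oplus\pi(z)=\pi(x)$, forcing $\pi(y)=\pi(z)$. So both proofs end at the same collision, but the paper locates it with one line of algebra rather than a full structural classification. What your approach buys is the converse direction and an explicit coordinate description of all Fano permutations; in particular it gives an independent route to the enumeration in the paper's Lemma~\ref{lem:fano}, since one can now count the matrices $T$ for which $\inner{z}{Tz+c_z}=1$ holds for all seven non-zero $z$. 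The paper instead reaches that enumeration via the companion identity $\inner{x}{\pi(y)}\oplus\inner{y}{\pi(x)}=1$ and a short case analysis.
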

\begin{proof}
 Suppose to the contrary that $x \in \orth{\pi(x)}$ for some non-zero $x$. Let $\orth{\pi(x)} = \{0,x,y,z\}$. The Fano property implies that
\[ \pi(x) \xor \pi(y) \xor \pi(z) = \pi(x), \]
so that $\pi(y) = \pi(z)$. This contradicts the fact that $\pi$ is a permutation.
\end{proof}

\begin{lemma} \label{lem:fano:orth2}
 Let $\pi$ be a Fano permutation. If $x \neq y$ are non-zero then
\[ \inner{x}{\pi(y)} \xor \inner{y}{\pi(x)} = 1. \]
\end{lemma}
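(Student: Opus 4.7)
The plan is to combine the Fano identity $\sigma(u) = u$ with Lemma~\ref{lem:fano:orth1} applied at the three points $x$, $y$, and $x \oplus y$. The pivotal auxiliary element is the unique non-zero $u \in \ZZ_2^3$ orthogonal to both $x$ and $y$: since $x \neq y$ are non-zero in $\ZZ_2^3$ they are linearly independent, so their span is a $2$-dimensional subspace whose orthogonal complement is $1$-dimensional and contains a single non-zero vector $u$.

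First I would show that $\orth{u} = \{0, x, y, x \oplus y\}$. By construction $x, y \in \orth{u}$, and hence $x \oplus y \in \orth{u}$ as well; since $\orth{u}$ has exactly four elements, this exhausts it. The Fano property $\sigma(u) = u$ together with $\pi(0) = 0$ then gives
\[
 \pi(x) \xor \pi(y) \xor \pi(x \oplus y) = u,
\]
so that $\pi(x \oplus y) = u \xor \pi(x) \xor \pi(y)$.

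Next I would apply Lemma~\ref{lem:fano:orth1} at the non-zero vector $x \oplus y$ to obtain $\inner{x \oplus y}{\pi(x \oplus y)} = 1$, substitute the expression just derived, and expand by bilinearity of $\inner{\cdot}{\cdot}$:
\[
 \inner{x \oplus y}{u} \xor \inner{x \oplus y}{\pi(x)} \xor \inner{x \oplus y}{\pi(y)} = 1.
\]
Here $\inner{x \oplus y}{u} = \inner{x}{u} \xor \inner{y}{u} = 0$, while Lemma~\ref{lem:fano:orth1} at $x$ and at $y$ turns the last two terms into $1 \xor \inner{y}{\pi(x)}$ and $\inner{x}{\pi(y)} \xor 1$ respectively. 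The two $1$'s cancel, leaving $\inner{x}{\pi(y)} \xor \inner{y}{\pi(x)} = 1$.

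There is really no hard step here: once the auxiliary vector $u$ is in hand, the argument is a short $\ZZ_2$-expansion. The only thing requiring care is the case analysis around $u$, namely checking that $u$ is indeed non-zero (needed to legitimately invoke Lemma~\ref{lem:fano:orth1} and to apply the Fano property meaningfully at $u$) and that the enumeration of $\orth{u}$ as $\{0, x, y, x \oplus y\}$ is correct even when $u$ happens to coincide with one of $x$, $y$, or $x \oplus y$; fortunately the computation above never uses that $u$ is distinct from these, only that $u \perp x$ and $u \perp y$.
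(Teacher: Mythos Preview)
Your proof is correct and follows essentially the same route as the paper: introduce the unique non-zero $u$ (the paper calls it $z$) orthogonal to both $x$ and $y$, use the Fano identity on $\orth{u}=\{0,x,y,x\oplus y\}$ to rewrite $\pi(x\oplus y)$, apply Lemma~\ref{lem:fano:orth1} at $x\oplus y$, and then expand bilinearly while invoking Lemma~\ref{lem:fano:orth1} at $x$ and $y$. The only difference is that you spell out more of the justifications (non-degeneracy of $u$, the enumeration of $\orth{u}$) that the paper leaves implicit.
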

\begin{proof}
 Denote by $z$ the unique non-zero element orthogonal to both $x$ and $y$, so that $\orth{z} = \{0,x,y,x\xor y\}$. Lemma~\ref{lem:fano:orth1} implies that
\begin{align*}
 1 &= \inner{x\xor y}{\pi(x\xor y)} = \inner{x\xor y}{\pi(x)\xor\pi(y)\xor z} \\ &=
 \inner{x\xor y}{\pi(x)} \xor \inner{x\xor y}{\pi(y)} = \inner{y}{\pi(x)} \xor \inner{x}{\pi(y)}. \qedhere
\end{align*}
\end{proof}

The preceding two properties enable us to enumerate all Fano permutations by hand.
\begin{lemma} \label{lem:fano}
 There are eight Fano permutations:
\begin{align*}
 &(135647) &(174652) \\
 &(153627) &(172635) \\
 &(236547) &(274563) \\
 &(13)(26)(45) &(15)(23)(46)
\end{align*}
\end{lemma}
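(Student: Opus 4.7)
The plan is to enumerate all Fano permutations by a case analysis driven by Lemmas~\ref{lem:fano:orth1} and~\ref{lem:fano:orth2}, and then verify directly that the resulting list matches the one in the statement.

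By Lemma~\ref{lem:fano:orth1}, $\pi(1)$ must lie in $\{1,3,5,7\}$, the set of non-zero elements with inner product one with $1$, giving four branches. Fix such a branch. For every $y \in \ZZ_2^3 \setminus \{0,1\}$, Lemma~\ref{lem:fano:orth2} applied to the pair $(1,y)$ determines $\inner{1}{\pi(y)}$ from the already known value of $\inner{y}{\pi(1)}$; combined with Lemma~\ref{lem:fano:orth1}, this narrows $\pi(y)$ to exactly two candidates, one in each coset of $\orth{1}$. So after this first propagation step, each branch has at most $2^6$ candidate permutations.

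Next, apply Lemma~\ref{lem:fano:orth2} to each pair $(y,z)$ of distinct elements of $\ZZ_2^3 \setminus \{0,1\}$. Each such application gives one $\ZZ_2$-linear equation in the binary variables $b_y$ that indicate which of the two remaining candidates for $\pi(y)$ is chosen. Together with the requirement that $\pi$ be injective, these constraints cut each branch down to precisely two permutations, for a total of eight. Each surviving permutation can then be written in cycle notation and matched against the list in the statement.

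Finally, for each of the eight listed permutations one checks directly that the signature $\sigma$ equals the identity (it suffices to compute $\sigma(1),\sigma(2),\sigma(4)$, since $\sigma$ is linear by Lemma~\ref{lem:signature}) and that the seven Fano-plane sums $\pi(x)\xor\pi(y)\xor\pi(z)$ with $x\xor y\xor z=0$ are non-zero, exactly as in the proof of Lemma~\ref{lem:antilinear:example}. The main obstacle is the sheer bookkeeping of the case analysis; choosing a good resolution order (for instance, first $\pi(2)$, then $\pi(4)$, then propagating to the remaining values) keeps the branching shallow and makes it easy to verify by hand both that no additional Fano permutation survives and that no listed permutation is missed.
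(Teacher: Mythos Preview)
Your approach is correct in outline, but it differs from the paper's and is considerably heavier. The key observation you do not use is that the Fano property $\sigma=\mathrm{id}$ can be applied \emph{directly}, not only via its consequences in Lemmas~\ref{lem:fano:orth1} and~\ref{lem:fano:orth2}. Concretely, the equations $\sigma(4)=4$, $\sigma(2)=2$, $\sigma(1)=1$ give
\[
\pi(3)=\pi(1)\xor\pi(2)\xor 4,\quad \pi(5)=\pi(1)\xor\pi(4)\xor 2,\quad \pi(6)=\pi(2)\xor\pi(4)\xor 1,
\]
and summing over all of $\ZZ_2^3$ yields $\pi(7)=\pi(1)\xor\pi(2)\xor\pi(4)\xor 7$. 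Thus a Fano permutation is determined by $\pi(1),\pi(2),\pi(4)$ alone. The paper then applies Lemmas~\ref{lem:fano:orth1} and~\ref{lem:fano:orth2} only to the three pairs among $\{1,2,4\}$: this fixes all three bits of $\pi(4)$ once $\pi(1),\pi(2)$ are chosen, and leaves $4$ choices for $\pi(1)$ and $2$ compatible choices for $\pi(2)$, for a total of $8$.

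Your route, by contrast, treats all six values $\pi(2),\ldots,\pi(7)$ as separate two-way choices and then solves the full system of $\binom{6}{2}$ relations from Lemma~\ref{lem:fano:orth2}. This does cut each branch to two solutions (the relations collapse to a single free bit; injectivity is not actually needed), so your claim is true, but it requires noticeably more bookkeeping and an a posteriori verification that the eight survivors really have $\sigma=\mathrm{id}$. The paper's shortcut bakes $\sigma=\mathrm{id}$ into the parametrisation from the start, so that step becomes automatic.
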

\begin{proof}
 A Fano permutation $\pi$ is determined by $\pi(1),\pi(2),\pi(4)$ since
\begin{align*}
 \pi(3) &= \pi(1) \xor \pi(2) \xor 4, \\
 \pi(5) &= \pi(1) \xor \pi(4) \xor 2, \\
 \pi(6) &= \pi(2) \xor \pi(4) \xor 1, \\
 \pi(7) &= \pi(1) \xor \pi(2) \xor \pi(4) \xor 7.
\end{align*}
Since $\inner{1}{\pi(1)} = 1$, we know that $\pi(1) \in \{1,3,5,7\}$. Similarly, $\inner{2}{\pi(2)} = 1$ and so $\pi(2) \in \{2,3,6,7\}$. Moreover, $\inner{1}{\pi(2)} = \inner{2}{\pi(1)} \xor 1$. Thus if $\pi(1) \in \{1,5\}$ then $\pi(2) \in \{3,7\}$, and if $\pi(1) \in \{3,7\}$ then $\pi(2) \in \{2,6\}$. Given any choice of $\pi(1),\pi(2)$, similar conditions determine $\pi(4)$ and so the rest of the permutation.
\end{proof}

Combining lemma~\ref{lem:fano} with the earlier lemma~\ref{lem:fano:normal:form}, we can enumerate all antilinear permutations.
\begin{corollary} \label{cor:antilinear:enum}
 There are $8(8-1)(8-2)(8-4) = 1344$ antilinear permutations.
\end{corollary}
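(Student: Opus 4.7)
The plan is to combine lemma~\ref{lem:fano:normal:form} with lemma~\ref{lem:fano} and reduce the count to a standard enumeration of invertible linear maps on $\ZZ_2^3$. By lemma~\ref{lem:fano:normal:form}, every antilinear permutation $\pi$ admits a unique factorization $\pi = L\varphi$ with $L$ a regular linear transformation of $\ZZ_2^3$ and $\varphi$ Fano. Equivalently, the assignment $(L,\varphi) \mapsto L\varphi$ is a bijection between pairs and antilinear permutations, so the total number of antilinear permutations equals the product $|GL(\ZZ_2^3)| \cdot |\{\text{Fano permutations}\}|$.

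Next, I would invoke lemma~\ref{lem:fano} to observe that there are exactly $8$ Fano permutations. It then remains only to count $|GL(\ZZ_2^3)|$, which I would do by the usual argument of choosing the images of a basis: the image of the first basis vector can be any non-zero element of $\ZZ_2^3$, giving $8-1$ choices; the image of the second may be any vector outside the one-dimensional span of the first image, giving $8-2$ choices; and the image of the third may be any vector outside the two-dimensional span of the first two images, giving $8-4$ choices. Hence $|GL(\ZZ_2^3)| = (8-1)(8-2)(8-4) = 168$.

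Multiplying the two counts yields $8 \cdot (8-1)(8-2)(8-4) = 1344$ antilinear permutations, matching the expression in the statement. There is no real obstacle here once lemmas~\ref{lem:fano:normal:form} and~\ref{lem:fano} are in hand; the only subtlety is to verify that both directions of the bijection in lemma~\ref{lem:fano:normal:form} are in force (existence \emph{and} uniqueness), which is already explicit in its proof, so the corollary follows immediately.
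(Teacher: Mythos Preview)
Your proof is correct and follows essentially the same approach as the paper: invoke lemma~\ref{lem:fano:normal:form} for the bijection $(L,\varphi)\mapsto L\varphi$, use lemma~\ref{lem:fano} for the count of $8$ Fano permutations, and multiply by $|GL(\ZZ_2^3)|=(8-1)(8-2)(8-4)$. The only difference is that you spell out the standard count of invertible linear maps, which the paper simply states.
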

\begin{proof}
Lemma~\ref{lem:fano:normal:form} shows that each antilinear permutation is obtained uniquely by multiplying a regular linear transformation and a Fano permutation. There are $(8-1)(8-2)(8-4)$ of the former and $8$ of the latter. 
\end{proof}

\bibliography{Cubes}{}
\bibliographystyle{plain}

\end{document}